\newtheorem{theorem}{Theorem}[section]
\newtheorem{remark}[theorem]{ Remark}
\newtheorem{corollary}[theorem]{Corollary}
\newtheorem{proposition}[theorem]{Proposition}
\newtheorem{definition}[theorem]{Definition}
\newcommand{\RR} {\mathbb R}
\newcommand{\pa} {\partial}
\newcommand{\beq} {\begin{equation}}
\newcommand{\eeq} {\end{equation}}
\newcommand{\inrad}{\operatorname{inrad}}
\begin{document}

\title[ 
Remarks on a result of Harrell-Kr\"{o}ger-Kurata ]{ On maximizing the fundamental frequency of the complement of an obstacle}

\author[]{Bogdan Georgiev}
\address{Max Planck Institute for Mathematics, Vivatsgasse 7, 53111 Bonn, Germany}
\email{bogeor@mpim-bonn.mpg.de}

\author[]{Mayukh Mukherjee}
\address{Mathematics Department, Technion - I.I.T., Haifa 32000, Israel}
\email{mathmukherjee@gmail.com}

\begin{abstract}
	
	Let $\Omega \subset \RR^n$ be a bounded domain satisfying a Hayman-type asymmetry condition, and let $ D $ be an arbitrary bounded domain referred to as "obstacle". We are interested in the behaviour of the first Dirichlet eigenvalue $ \lambda_1(\Omega \setminus (x+D)) $.

	First, we prove an upper bound on $ \lambda_1(\Omega \setminus (x+D)) $ in terms of the distance of the set $ x+D $ to the set of maximum points $ x_0 $ of the first Dirichlet ground state $ \phi_{\lambda_1} > 0 $ of $ \Omega $. In short, a direct corollary is that if
	\begin{equation}
		\mu_\Omega := \max_{x}\lambda_1(\Omega \setminus (x+D))
	\end{equation}
	is large enough in terms of $ \lambda_1(\Omega) $, then all maximizer sets $ x+D $ of $ \mu_\Omega $ are close to each maximum point $ x_0 $ of $ \phi_{\lambda_1} $.
	
	Second, we discuss the distribution of $ \phi_{\lambda_1(\Omega)} $ and the possibility to inscribe wavelength balls at a given point in $ \Omega $.
	
	Finally, we specify our observations to convex obstacles $ D $ and show that if $ \mu_\Omega $ is sufficiently large with respect to $ \lambda_1(\Omega) $, then all maximizers $ x+D $ of $ \mu_\Omega $ contain all maximum points $ x_0 $ of $ \phi_{\lambda_1(\Omega)} $. 
%
\end{abstract}


\maketitle

\section{Introduction and background}
	We consider the problem of placing of an obstacle in a domain so as to maximize the fundamental frequency of the complement of the obstacle. To be more precise, let $\Omega \subset \RR^n$ be a bounded domain, and let $ D $ be another bounded domain referred to as "obstacle". The problem is to determine the optimal translate $ x+D $ so that the fundamental Dirichlet Laplacian eigenvalue $\lambda_1(\Omega \setminus (x + D))$ is maximized/minimized.
	
	In case the obstacle $ D $ is a ball, physical intuition suggests that for sufficiently regular domains and sufficiently small balls, $\Omega$, $\lambda_1(\Omega \setminus B_r(x))$ will be maximized when $x = x_0$, a point of maximum of the ground state Dirichlet eigenfunction $\phi_{\lambda_1}$ of $\Omega$. Heuristically, such maximum points $ x_0 $ seem to be situated deeply in $ \Omega $, hence removing a ball around $ x_0 $ should be an optimal way of truncating the lowest possible frequency. Our methods give equally good results for Schr\"{o}dinger operators on a large class of bounded domains sitting inside Riemannian manifolds (see the remarks at the end of Section \ref{sec:mainres}).

	The following well-known result of Harrell-Kr\"{o}ger-Kurata treats the case when $ \Omega $ satisfies convexity and symmetry conditions:
	
	\begin{theorem}[\cite{HKK}]\label{thm:HKK}
		Let $\Omega$ be a convex domain in $\RR^n$ and $B$ a ball contained in $\Omega$. Assume that $\Omega$ is symmetric with respect to some hyperplane $H$. Then,
	\newline
	(a) at the maximizing position, $B$ is centered on $H$, and \newline
	(b) at the minimizing position, $B$ touches the boundary of $\Omega$.
	\end{theorem}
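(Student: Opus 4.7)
My plan is to base both parts on the Hadamard shape-derivative formula for the function $\mu(x) := \lambda_1(\Omega \setminus B_r(x))$, defined on the valid region $\mathcal V := \{x \in \RR^n : \overline{B_r(x)} \subset \Omega\}$, which is convex by convexity of $\Omega$. If $\phi_x > 0$ denotes the $L^2$-normalized Dirichlet ground state of $\Omega \setminus B_r(x)$, then an infinitesimal translation of the obstacle by a vector $v$ yields
\[
\nabla_x \mu(x) \cdot v \;=\; \int_{\partial B_r(x)} |\partial_\nu \phi_x|^2 \, (v \cdot \hat n) \, dS,
\]
where $\hat n$ is the outward unit normal to the ball. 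Critical points of $\mu$ are thus characterised by the vanishing of the vector $\int_{\partial B_r(x)} |\partial_\nu \phi_x|^2 \hat n \, dS$.

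For part (a), reflection $\sigma$ through $H$ preserves $\Omega$ and, by uniqueness of the positive ground state, $\phi_{\sigma x} = \phi_x \circ \sigma$, so $\mu(x) = \mu(\sigma x)$. Supposing toward a contradiction that a maximizer $x^*$ does not lie on $H$, assume first $\overline{B_r(x^*)} \cap H = \emptyset$. Writing $\phi := \phi_{x^*}$ and $\phi^\sigma(y) := \phi(\sigma y)$ (the ground state of $\Omega \setminus B_r(\sigma x^*)$ with the same eigenvalue $\mu(x^*)$), the difference $w := \phi - \phi^\sigma$ solves $(-\Delta - \mu(x^*))w = 0$ on $U := \Omega \setminus (B_r(x^*) \cup B_r(\sigma x^*))$, vanishes on $\partial \Omega \cup H$, and has opposite signs on the two ball boundaries. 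A maximum principle applied to $w$ on each of the two halves of $U$ separated by $H$---legitimate by strict domain monotonicity, since $\mu(x^*)$ is strictly below $\lambda_1$ of such a half---forces an ordering $\phi \leq \phi^\sigma$ on the half containing $x^*$ and the reverse on the other. A Hopf-type argument at $H$, combined with the Hadamard formula, then produces a strict sign for the $e_n$-component of $\nabla_x \mu(x^*)$, contradicting $\nabla_x \mu(x^*) = 0$. The case where $B_r(x^*)$ straddles $H$ is handled analogously, by comparing $\phi$ with $\phi^\sigma$ directly on $\Omega \cap H^+$.

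For part (b), the aim is to show that $\mu$ admits no interior minimum on $\mathcal V$, so the minimum is attained on $\partial \mathcal V$, i.e., precisely when $\overline{B_r(x)} \cap \partial \Omega \neq \emptyset$. Since $\phi_x$ vanishes on both $\partial B_r(x)$ and $\partial \Omega$, there is ``more room'' for $\phi_x$ to rise into a positive region on the side of $\partial B_r(x)$ facing the interior of $\Omega$ than on the side facing $\partial \Omega$. This asymmetry should make $|\partial_\nu \phi_x|^2$ strictly larger on the interior-facing side, so that the Hadamard integral $\int_{\partial B_r(x)} |\partial_\nu \phi_x|^2 \hat n \, dS$ has a nonzero component pointing away from $\partial \Omega$ at every interior $x \in \mathcal V$. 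Hence $\mu$ strictly decreases along any direction from $x$ toward $\partial \Omega$, and the infimum is attained at configurations touching $\partial \Omega$.

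The main obstacle in both parts is the comparison step. In (a), converting the sign of $w$ into the correct sign of the $e_n$-component of the shape-derivative integral requires quantitative boundary regularity (essentially Hopf-type estimates along the ball boundary), and the straddling subcase demands careful bookkeeping of the reflection. In (b), obtaining a rigorous asymmetry of $|\partial_\nu \phi_x|^2$ along $\partial B_r(x)$ appears to demand a genuine one-sided comparison---most naturally via a moving-plane argument against a half-space barrier adapted to the convex $\partial \Omega$---and this is the step I expect to be the most delicate.
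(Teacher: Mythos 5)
This statement is quoted background from \cite{HKK}; the paper gives no proof of it, only the remark that the original argument is a moving-planes method measuring the derivative of $\lambda_1(\Omega\setminus B)$ under normal shifts of $B$. Your framework (Hadamard shape derivative plus reflection comparison) is therefore the right one in spirit, but the execution has a genuine gap in the choice of reflecting hyperplane. In part (a) you reflect through the symmetry hyperplane $H$ itself. Since $\sigma$ does not preserve $B_r(x^*)$, the comparison function $w=\phi-\phi^{\sigma}$ does \emph{not} vanish on $\partial B_r(x^*)$ (there $w=-\phi^{\sigma}<0$), so the maximum principle only yields $\phi<\phi^{\sigma}$ on the half containing the ball, and no Hopf lemma can be applied on $\partial B_r(x^*)$ to compare normal derivatives there. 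Consequently the sign of $w$ gives no control on the Hadamard integrand $|\partial_\nu\phi|^2\hat n$ over $\partial B_r(x^*)$; the difficulty is structural, not a matter of ``quantitative boundary regularity.'' The correct device is to reflect through the hyperplane $P$ \emph{parallel to $H$ and passing through the center of the ball}: then $\sigma_P B=B$, the small side $\Omega\cap P^{+}$ reflects into $\Omega$ by convexity and symmetry, $w$ vanishes on $\partial B\cap P^{+}$, and Hopf's lemma gives the strict comparison of $|\partial_\nu\phi|$ between the two halves of $\partial B$, hence a strict sign for the derivative of $\lambda_1$ in the direction normal to $H$. This also removes your ``straddling'' subcase entirely, and it upgrades the conclusion from ``no interior critical point off $H$'' to genuine monotonicity of $\lambda_1$ as the ball moves away from $H$ --- which you need anyway, since a maximizer with $\overline{B_r(x^*)}$ touching $\partial\Omega$ is not an interior point of $\mathcal V$ and need not satisfy $\nabla_x\mu(x^*)=0$.

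Part (b) as written is not a proof: the claimed asymmetry of $|\partial_\nu\phi_x|^2$ between an ``interior-facing'' and a ``boundary-facing'' side of $\partial B_r(x)$ is exactly the hard step, you do not establish it, and the statement you aim for (a decrease direction at \emph{every} interior position, with no reference to $H$) is stronger than what \cite{HKK} prove and does not obviously follow from convexity alone. The symmetry hypothesis is not decorative here: part (b) follows from the same monotonicity as part (a) --- for any position with center off $H$, moving the ball perpendicular to $H$ and away from it strictly decreases $\lambda_1$ (and the valid region is preserved along that path until the ball meets $\partial\Omega$), so the minimum over positions is attained only at configurations touching the boundary. I recommend restructuring both parts around the single reflection lemma for hyperplanes of interior reflection through the center of $B$.
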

	
	The last result of Harrell-Kr\"{o}ger-Kurata seems to work under rather strong symmetry assumption. We also recall that the proof of Harrell-Kr\"{o}ger-Kurata proceeds via a moving planes method which essentially measures the derivative of $\lambda_1 (\Omega \setminus B)$ when $B$ is shifted in a normal direction to the hyperplane (also see pp 58 of \cite{He}).
	
	There does not seem to be any result in the literature treating domains without symmetry or convexity properties.
	
	In our note, we consider bounded domains $\Omega \subset \RR^n$ which satisfy an asymmetry assumption in the following sense:
	\begin{definition}\label{def:AA}
		A bounded domain $\Omega \subset \RR^n$ is said to satisfy the asymmetry assumption with coefficient $\alpha$ (or $ \Omega $ is $ \alpha $-asymmetric) if for all $x \in \partial \Omega$, and all $r_0 > 0$, 
	 	\begin{equation}
		 	\frac{|B_{r_0}(x) \setminus \Omega|}{|B_{r_0}(x)|} \geq \alpha.
	 	\end{equation}
	\end{definition}
	This condition seems to have been introduced in \cite{Ha}. Further, the $ \alpha $-asymmetry property was utilized by D. Mangoubi in order to obtain inradius bounds for Laplacian nodal domains (cf. \cite{M}) as nodal domains are asymmetric with $ \alpha = \frac{C}{\lambda^{(n-1)/2}} $.
	
	From our perspective, the notion of asymmetry is useful as it basically rules out narrow "spikes" (i.e. with relatively small volume) entering deeply into $\Omega$. For example, let us also observe that convex domains trivially satisfy our asymmetry assumption with coefficient $\alpha = \frac{1}{2}$.

	\section{The basic estimate for general obstacles} \label{sec:mainres}

	With the above in mind, we consider any bounded $ \alpha$-asymmetric domain $\Omega \subset \RR^n$ and a bounded obstacle domain $ D $. We denote the first positive Dirichlet eigenvalue and eigenfunction of $ \Omega $ by $\lambda_1 $ and $ \phi_{\lambda_1(\Omega)} $ respectively and let
	\begin{equation}
		M := \{ x \in \Omega ~|~ \phi_{\lambda_1}(x) = \| \phi_{\lambda_1(\Omega)} \|_{L^\infty (\Omega)} \}
	\end{equation}
	be the set of maximum points of $ \phi_{\lambda_1 (\Omega)} $.
	
	Let us also put
	\begin{equation}
		\mu_\Omega := \max_{x} \lambda_1(\Omega \setminus (x + D)).
	\end{equation}
	Finally, for a given translate $ x+D $ of the obstacle let us set
	\begin{equation}
		\rho_x := \max_{y \in M} d(y, x+D),
	\end{equation}
	measuring the maximum distance from a maximum point of $ \phi_{\lambda_1(\Omega)} $ to the translate $ x+D $.
	
	Our main estimate is the following.
	\begin{theorem}\label{thm:mainres}
		Let us fix a translate $ (x + D) $ and assume that $  \rho_x > 0  $. Then
		\begin{equation}
			\lambda_1(\Omega \setminus (x + D)) \leq \beta(\rho_x) \lambda_1 (\Omega),
		\end{equation}
		where $ \beta $ is a continuous decreasing function defined as
		\begin{equation}
			\beta(\rho) =
			\begin{cases}
				\beta_0 = \beta_0(n, \alpha), \quad \rho \sqrt{\lambda_1(\Omega)} > r_0 := r_0(n, \alpha), \\
				\frac{c_0}{\rho^2 \lambda_1(\Omega)}, \quad \rho \sqrt{\lambda_1(\Omega)} \leq r_0, \quad c_0 = c_0 (n),
			\end{cases}
		\end{equation}
		where $\beta_0 r_0 = c_0$.
	\end{theorem}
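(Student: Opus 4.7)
The plan is to exhibit a ball sitting inside $\Omega \setminus (x+D)$ whose radius is controlled from below, and then invoke domain monotonicity of the first Dirichlet eigenvalue together with the explicit Faber--Krahn value on a ball. Since $\rho_x > 0$, I would pick $y_0 \in M$ realizing the maximum distance, $d(y_0, x+D) = \rho_x$, so that the open ball $B_{\rho_x}(y_0)$ is disjoint from $x+D$ by the very definition of $\rho_x$. The only remaining issue is that $B_{\rho_x}(y_0)$ need not be contained in $\Omega$; under $\alpha$-asymmetry, however, one can force a ball of wavelength scale around $y_0$ to lie inside $\Omega$, and intersecting yields what we need.

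The central geometric input I would isolate as an auxiliary lemma is the following wavelength-ball statement: if $\Omega$ is $\alpha$-asymmetric and $y_0$ is a maximum point of $\phi_{\lambda_1}$, then there exists $c = c(n,\alpha) > 0$ with
\[
B_{r_*}(y_0) \subset \Omega, \qquad r_* := \frac{c(n,\alpha)}{\sqrt{\lambda_1(\Omega)}}.
\]
I expect this to be the main obstacle. The natural route is a Hayman-type argument in the spirit of Mangoubi's inradius bounds \cite{M}: if $y_0$ lay within $r_*$ of $\partial \Omega$, then $\alpha$-asymmetry at a nearby boundary point would force a ball centered near $y_0$ to lose a definite fraction of its volume to $\Omega^c$; combining a Faber--Krahn estimate on $\Omega \cap B$ with pointwise eigenfunction bounds should contradict $\phi_{\lambda_1}(y_0) = \|\phi_{\lambda_1}\|_\infty$. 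Such a statement fits naturally with the subsequent section of the paper announced in the abstract on inscribing wavelength balls.

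Granted the lemma, the rest is immediate. Let $r := \min(\rho_x, r_*)$; then $B_r(y_0) \subset \Omega \setminus (x+D)$, so by Dirichlet domain monotonicity and Faber--Krahn,
\[
\lambda_1(\Omega \setminus (x+D)) \leq \lambda_1(B_r(y_0)) = \frac{j_{n/2-1,1}^2}{r^2}.
\]
If $\rho_x \sqrt{\lambda_1(\Omega)} > r_0 := c(n,\alpha)$, then $r = r_*$, giving the bound $\beta_0 \lambda_1(\Omega)$ with $\beta_0 := j_{n/2-1,1}^2 / c^2$. If $\rho_x \sqrt{\lambda_1(\Omega)} \leq r_0$, then $r = \rho_x$, giving $\bigl(c_0/(\rho_x^2 \lambda_1(\Omega))\bigr) \lambda_1(\Omega)$ with $c_0 := j_{n/2-1,1}^2$. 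The two expressions agree at the threshold since $\beta_0 r_0^2 = c_0$, so $\beta$ is continuous and manifestly non-increasing in $\rho_x$.
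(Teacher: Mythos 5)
Your overall strategy is exactly the paper's: find a wavelength-scale ball inscribed at a maximum point, intersect with the constraint imposed by $\rho_x$ to get $B_{\min(\rho_x, r_*/\sqrt{\lambda_1})}(y_0) \subset \Omega \setminus (x+D)$, and conclude by domain monotonicity plus the explicit eigenvalue of a ball; the case split and the matching constant $\beta_0 r_0^2 = c_0$ are likewise identical (note the paper's displayed normalization $\beta_0 r_0 = c_0$ appears to be a typo for $\beta_0 r_0^2 = c_0$, consistent with your computation). The auxiliary lemma you correctly isolate as the main obstacle is precisely what the paper imports as Theorem \ref{th:Large-Inscribed-Ball} from \cite{GM}, combined with $\alpha$-asymmetry to upgrade the $(1-\epsilon_0)$-volume-density ball to a fully inscribed one. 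The one caveat concerns your sketched proof of that lemma: a purely volume-based Hayman/Mangoubi-type argument (Faber--Krahn on $\Omega \cap B$ plus pointwise bounds) does not close in dimension $n \geq 3$, since the ground state does not ``feel'' sets of small capacity even when they have substantial volume near the point in question; the correct implementation, as in \cite{GM} and \cite{RS}, runs through the Feynman--Kac formula together with the isocapacitary inequality and hitting-probability estimates (large volume density of $\Omega^c$ near $y_0$ forces large relative capacity, hence a definite probability of Brownian motion being killed, hence $\phi_{\lambda_1}(y_0) < \|\phi_{\lambda_1}\|_\infty$). If you either cite that result, as the paper does, or replace ``volume'' by ``capacity'' in your contradiction argument, the proof is complete.
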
	
	
	We remark that in particular if $ \rho_x $ is of sub-wavelength order (i.e. $ \lesssim \frac{1}{\sqrt{\lambda_1(\Omega)}} $), then $ \lambda_1(\Omega \setminus (x + D)) \lesssim \frac{1}{\rho_x^2} $. If the obstacle $D$ is convex, we can say more (see Theorem \ref{thm:Kurata-comment} below).
	
	\begin{proof}[Proof of Theorem \ref{thm:mainres}]
		The proof essentially exploits the fact that there are ``almost inscribed'' wavelength balls centered at maximum points of $ \phi_{\lambda_1(\Omega)} $. To make this statement precise, we recall the following theorem from \cite{GM}, which works for {\bf all domains} in compact Riemannian manifolds of dimension $n \geq 3$ (planar domains are known to have wavelength inradius from the work of Hayman (\cite{Ha})):
		
		\begin{theorem} \label{th:Large-Inscribed-Ball}
			Let $\dim M \geq 3, \epsilon_0 > 0 $ be fixed, $\Omega$ a domain inside $M$, and $ x_0 \in \Omega$ be such that $ |\varphi_\lambda(x_0)| = max_{\Omega}|\varphi_\lambda| $, where $\varphi_\lambda$ is the ground state Dirichlet eigenfunction of $\Omega$. There exists $ r_0 = r_0 (\epsilon_0) $, such that
			\begin{equation}\label{Vol}
			\frac{| B_{r_0} \cap \Omega |}{|B_{r_0}|} \geq 1 - \epsilon_0,
			\end{equation}
			where $ B_{r_0} $ denotes $ B\left(x_0, \frac{r_0}{\sqrt{\lambda_1}}\right) $.
		\end{theorem}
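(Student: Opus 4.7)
The plan is to combine a Feynman--Kac argument with Gaussian heat kernel bounds and symmetric decreasing rearrangement. Writing $\lambda := \lambda_1(\Omega)$ and letting $p_t^\Omega$ denote the Dirichlet heat kernel of $\Omega$, the eigenvalue equation and the semigroup identity $\varphi_\lambda = e^{\lambda t} e^{t \Delta_\Omega} \varphi_\lambda$, evaluated at the maximum point $x_0$ and combined with $\varphi_\lambda(y) \leq \varphi_\lambda(x_0)$, yield the survival estimate
\[
\int_\Omega p_t^\Omega(x_0, y)\, dy \;\geq\; e^{-\lambda t}.
\]
Dominating $p_t^\Omega$ by the ambient heat kernel $p_t$ on $M$ (which satisfies local Gaussian bounds $p_t(x,y) \leq C_1 t^{-n/2} e^{-d(x,y)^2/(C_2 t)}$ for small $t$), setting $R := r_0/\sqrt{\lambda}$ and $t := \sigma R^2$ for a parameter $\sigma > 0$, and splitting the integral over $B_R(x_0)$ and its complement yields
\[
\int_{B_R(x_0) \cap \Omega} p_t(x_0, y)\, dy \;\geq\; e^{-\sigma r_0^2} - C_3 e^{-1/(C_2 \sigma)}.
\]

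Next, I invoke the Hardy--Littlewood rearrangement inequality: since $p_t(x_0, \cdot)$ is radially decreasing about $x_0$ (exactly in $\RR^n$, approximately so on a manifold at small scales), for any measurable $A \subset B_R(x_0)$,
\[
\int_A p_t(x_0, y)\, dy \;\leq\; \int_{B(x_0, \rho)} p_t(x_0, y)\, dy, \qquad |B(x_0, \rho)| = |A|.
\]
Assume, for contradiction, that $\delta := |\Omega \cap B_R|/|B_R| < 1 - \epsilon_0$; applying the inequality with $A = \Omega \cap B_R$ gives $\rho \leq (1-\epsilon_0)^{1/n} R$. By Brownian scaling the right-hand side is at most $q(\sigma) := P(|W_1|^2 \leq (1-\epsilon_0)^{2/n}/\sigma)$, where $W_t$ is standard Brownian motion on $\RR^n$. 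Combining with the lower bound,
\[
q(\sigma) \;\geq\; e^{-\sigma r_0^2} - C_3 e^{-1/(C_2 \sigma)}.
\]

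The main remaining task, and the technical heart of the proof, is the parameter calibration. As $\sigma \to 0^+$, a chi-squared tail estimate gives
\[
1 - q(\sigma) \;\sim\; c_n \sigma^{-(n-2)/2} e^{-(1-\epsilon_0)^{2/n}/(4\sigma)},
\]
while the Gaussian tail term decays like $e^{-1/(4\sigma)}$ in the model Euclidean case $C_2 = 4$. Since $(1-\epsilon_0)^{2/n} < 1$ for every $\epsilon_0 > 0$, the former strictly dominates the latter. I then pick $\sigma = \sigma(n, \epsilon_0)$ so small that $C_3 e^{-1/(C_2 \sigma)} \leq (1 - q(\sigma))/4$, and $r_0 = r_0(n, \epsilon_0)$ small enough that $\sigma r_0^2 \leq (1 - q(\sigma))/4$; then
\[
e^{-\sigma r_0^2} - C_3 e^{-1/(C_2 \sigma)} \;\geq\; 1 - (1 - q(\sigma))/2 \;>\; q(\sigma),
\]
contradicting the displayed inequality. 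The hypothesis $n \geq 3$ enters through the chi-squared tail behavior (and through transience-type decay of the heat kernel); the subtle point on a general compact manifold is that the Gaussian constants, as well as the validity of radial rearrangement, depend on local curvature, but at scales $t \to 0$ these corrections become negligible and can be absorbed into slightly worse constants.
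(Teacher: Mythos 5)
The paper does not prove this theorem itself: it is quoted from \cite{GM}, with the remark that the proof there runs through the Feynman--Kac formula together with capacity estimates for hitting probabilities of Brownian motion (the volume deficiency of $\Omega^c$ in $B_{r_0}$ is converted into Newtonian capacity, which forces the Brownian path started at $x_0$ to exit $\Omega$ quickly, contradicting the survival bound). You keep the first ingredient --- your derivation of $\int_\Omega p_t^\Omega(x_0,y)\,dy \geq e^{-\lambda t}$ from the semigroup identity at the maximum point is exactly the standard Feynman--Kac step --- but you replace the capacity machinery by the bathtub/Hardy--Littlewood principle applied to the Gaussian kernel. In $\RR^n$ this works and is arguably cleaner: with the exact kernel the two error terms combine to $1 - P(\rho < |W_t| \leq R)$, so the contradiction already follows at a fixed scale $t = R^2$ by taking $r_0^2$ smaller than the (purely dimensional) annulus probability $P\bigl((1-\epsilon_0)^{1/n} R < |W_{R^2}| \leq R\bigr)$; your more delicate $\sigma \to 0$ calibration is not even needed there. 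Note, however, that your explanation of where $n \geq 3$ enters is not right: nothing in your argument uses $n\geq 3$ (chi-squared tails behave the same way in all dimensions), whereas in \cite{GM} the restriction comes from the isocapacitary inequality and transience, which you have bypassed entirely.

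The genuine gap is in the manifold case, which is the setting in which the theorem is stated. Your final contradiction rests on the margin $1 - q(\sigma)$, which is of order $e^{-(1-\epsilon_0)^{2/n}/(4\sigma)}$, i.e.\ exponentially small in $1/\sigma$. The step ``$\int_{\Omega\cap B_R} p_t \leq q(\sigma)$'' uses the rearrangement bound with multiplicative constant exactly $1$; on a manifold the best one gets from small-time Gaussian/Varadhan estimates is an upper bound of the form $(1+\eta)\,q'(\sigma)$ with $\eta = O(t) = O(\sigma r_0^2/\lambda)$ (on-diagonal constant and volume-element corrections). Since $q(\sigma) \to 1$, an error $\eta$ that is only polynomially small in $t$ swamps the exponentially small gap $1-q(\sigma)$ unless $t$ is exponentially small in $1/\sigma$, i.e.\ unless $\lambda$ is very large; and the argument degenerates altogether when $\lambda_1(\Omega)$ is small, so that $R = r_0/\sqrt{\lambda}$ is comparable to the injectivity radius and the local Gaussian bounds no longer apply. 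So ``absorbed into slightly worse constants'' is precisely the step that fails: the constants must be controlled multiplicatively to exponential precision, not just boundedly. The capacity route of \cite{GM} avoids this because its error terms are compared at scale $O(\lambda t)$ against a hitting probability of size $\epsilon_0^{(n-2)/n}$, both of which are robust under bounded distortions of the metric. Your proof stands as written for $\RR^n$ (the case actually used in this paper), but the manifold statement needs either the capacity argument or a reworking of your calibration at a fixed scale $\sigma \sim 1$ with a quantitative comparison-geometry input.
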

		We also note that it follows from the proof that $r_0$ can be taken as $r_0 = \epsilon_0^{\frac{n - 2}{2n}}$. Moreover, let us for completeness recall that Theorem $ \ref{th:Large-Inscribed-Ball} $ relies on two main ingredients - namely, the Feynman-Kac formula and certain capacity estimates related to hitting probabilities of Brownian motion. We refer to \cite{GM} and \cite{RS} for more details.
		
		Now, it is clear that under the $ \alpha $-asymmetry assumption, there exists an $ r_0 := r_0(\alpha, n) $, such that around each maximum point $ x_0 \in \Omega $ of $ \phi_{\lambda_1(\Omega)} $ one can find a fully inscribed ball $B_{r_0 / \sqrt{\lambda_1(\Omega)}} (x_0) \subseteq \Omega$.
		By the definition of $ \rho_x $ it follows that we can find a maximum point $ x_0 \in  (\Omega \setminus (x + D)) $ and an inscribed ball $ B_{\rho_0}(x_0) $ where
		\begin{equation}
		\rho_0 := \min \left( \frac{r_0}{\sqrt{\lambda_1(\Omega)}}, \rho_x \right).
		\end{equation}
		
		As the first eigenvalue is monotonic with respect to inclusion, we see that
		
		\begin{equation}
		\lambda_1(\Omega \setminus (x + D)) \leq \lambda_1(B_{\rho_0}(x_0)) = \frac{C}{\rho_0^2},
		\end{equation}
		
		where $ C = C(n) $ is a universal constant.
		
		
		Expressing the right hand side of the last inequality in terms of $ \lambda_1(\Omega) $ we define the function $ \beta(\rho) $ as above.
		
		This concludes the proof.
	\end{proof}
	
	Here, we have considered the obstacle problem in the case of Euclidean spaces, on reasonably well-behaved domains, and for the operator $-\Delta + \lambda_1(\Omega)$, as that seems to be the primary case of interest. However, we also include some remarks outlining some straightforward generalizations.
	
	\begin{remark}
		It is clear that removing capacity zero sets from $ \alpha $-asymmetric domains considered in Definition \ref{def:AA} will lead to the same conclusions. Indeed, in this situation we will not be dealing with fully inscribed balls as above - instead, we will have balls whose first eigenvalue is comparable to the one of an inscribed one.
	\end{remark}
	\begin{remark}
		Also, in the setting of curved spaces, one has absolutely similar results for $\Omega \subseteq M$, where $(M, g)$ is a smooth compact Riemannian manifold, if we allow the constants to depend on the dimension, asymmetry and the metric $g$.
	\end{remark}
	\begin{remark}
		Lastly, it is clear that the results of \cite{RS} allow us to extend our discussion here from operators of the form $-\Delta + \lambda_1(\Omega)$ to Schr\"{o}dinger operators of the form $-\Delta + V$, where $V$ is bounded above. The conclusions are analogous with $\lambda_1(\Omega)$ replaced by $\| V\|_{L^\infty}$and the proofs are identical. 
	\end{remark}
	
	Now, as an immediate implication of Theorem \ref{thm:mainres} we have the following corollary.
	
	\begin{corollary}\label{cor:Localization}
		Suppose that $ \mu_\Omega = C_0 \lambda_1(\Omega)$, where $ C_0 > \frac{c_0}{r_0^2} $ is a given fixed constant and $ c_0, r_0 $ are the constants in Theorem $ \ref{thm:mainres} $. Then, for a maximizer $ \bar{x} + D $ of $ \mu_\Omega $ we have
		\begin{equation}
			\rho_{\bar{x}} \leq \beta^{-1}(C_0).
		\end{equation}
		
		In particular, if $ C_0 $ is large,
		\begin{equation}
			\rho_{\bar{x}} \lesssim \frac{1}{\sqrt{C_0 \lambda_1(\Omega)}}.
		\end{equation}
	\end{corollary}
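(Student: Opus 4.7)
The strategy is a direct inversion of the bound in Theorem \ref{thm:mainres}, applied at the maximizer itself. By hypothesis, $\bar{x}+D$ realizes the maximum, so $\lambda_1(\Omega\setminus(\bar{x}+D))=\mu_\Omega=C_0\lambda_1(\Omega)$. Assuming $\rho_{\bar{x}}>0$ (otherwise the claim is vacuous, since the distance already vanishes), Theorem \ref{thm:mainres} applied at $\bar{x}$, followed by division by $\lambda_1(\Omega)$, gives
\[
C_0 \;\leq\; \beta(\rho_{\bar{x}}).
\]

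The real content is passing from this bound on $\beta$ to a bound on $\rho_{\bar{x}}$, and here the only subtlety is that $\beta$ is not strictly monotonic: it is constant at the value $\beta_0=c_0/r_0^2$ on the range $\{\rho:\rho\sqrt{\lambda_1(\Omega)}>r_0\}$. I would use the standing hypothesis $C_0>c_0/r_0^2=\beta_0$ to exclude this flat piece. Indeed, were $\rho_{\bar{x}}\sqrt{\lambda_1(\Omega)}>r_0$, then $\beta(\rho_{\bar{x}})=\beta_0<C_0$, contradicting the displayed inequality. Hence $\rho_{\bar{x}}\sqrt{\lambda_1(\Omega)}\leq r_0$, and on this subwavelength branch $\beta(\rho)=c_0/(\rho^2\lambda_1(\Omega))$ is strictly decreasing, so the inequality inverts cleanly:
\[
\rho_{\bar{x}}\;\leq\;\beta^{-1}(C_0)\;=\;\sqrt{\frac{c_0}{C_0\,\lambda_1(\Omega)}}.
\]

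For the second assertion, the right-hand side above is already in the form $\lesssim 1/\sqrt{C_0\lambda_1(\Omega)}$, which is the stated sub-wavelength localization. There is no substantial obstacle: the entire geometric work has been absorbed into Theorem \ref{thm:mainres}, and the only care required is the elementary bookkeeping that the assumption $C_0>c_0/r_0^2$ situates us in the strictly decreasing branch of $\beta$ so that inversion is well defined.
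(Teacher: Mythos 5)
Your proof is correct and is exactly the argument the paper has in mind; the paper states the corollary as an "immediate implication" of Theorem~\ref{thm:mainres} without writing out the steps, and your direct inversion — using $C_0>\beta_0$ to escape the flat branch of $\beta$ and then inverting on the strictly decreasing subwavelength branch — is the straightforward way to make "immediate" precise. One small remark: you implicitly (and correctly) read the threshold value as $\beta_0=c_0/r_0^2$, which is what continuity of $\beta$ at $\rho\sqrt{\lambda_1(\Omega)}=r_0$ and the hypothesis $C_0>c_0/r_0^2$ of the corollary both demand; the line "$\beta_0 r_0 = c_0$" in the statement of Theorem~\ref{thm:mainres} is evidently a typo for $\beta_0 r_0^2 = c_0$.
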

	
	In other words the above corollary can be interpreted as follows: either $ \mu_\Omega $ is comparable to $ \lambda_1(\Omega) $, or the maximum points of $ \phi_{\lambda_1(\Omega)} $ are near the maximizer sets $ \bar{x} + D $ of $ \mu_\Omega $.
	
	We note that the localization in the Corollary above gets better when $ C_0 $ is large. By Faber-Krahn's inequality, straightforward examples with large $ C_0 $ are domains $ \Omega $ for which $ |\Omega \setminus (x +D) | $ is sufficiently small for some $ x $.

	Particularly, for bounded convex domains in $\RR^n$, by a theorem of Brascamp-Lieb, the level sets of $\phi_{\lambda_1(\Omega)}$ are convex. Since $\phi_{\lambda_1(\Omega)}$ is real analytic and it can be assumed positive on $\Omega \setminus \pa \Omega$ without loss of generality, this means that it has a unique point of maximum. So, in this setting, our result heuristically says that if removal of a ball $B_r$ has a ``significant effect'' on the vibration of $\Omega \setminus B_r$, then $B_r$ must be centered quite close to the max point of the ground state Dirichlet eigenfunction $\phi_{\lambda_1}$ of the domain $\Omega$, where the bound on $\rho_x$ gives the quantitative relation between the ``effect'' and the order of ``closeness''. In a sense, this can be seen to be complementary to Corollary II.3 of \cite{HKK}. 

	\section{Inscribed balls and distribution of $ \phi_{\lambda_1(\Omega)} $}

	Further, we specify our results to the obstacle being a ball $ D $. We point out a few statements related to the connection between the distribution of $ \phi_{\lambda_1(\Omega)} $ and the possibility to inscribe a large ball at a given point $ x $ in $ \Omega $.
	
	First, by Theorem \ref{th:Large-Inscribed-Ball} above we immediately have the following observation:
	
	\begin{proposition}\label{prop:Ball-at-Max}
		Let $ \Omega $ be $ \alpha $-asymmetric and let $ \inrad(\Omega) $ denote the inner radius of $ \Omega $. If $ x_0 $ is a point of maximum of $ \phi_{\lambda_1(\Omega)} $, then there exists an inscribed ball $ B_{C \inrad(\Omega)}(x_0) \subseteq \Omega $, where $ C = C(n, \alpha) $.
	\end{proposition}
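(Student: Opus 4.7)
The plan is to combine Theorem \ref{th:Large-Inscribed-Ball} (together with the $\alpha$-asymmetry argument already invoked in the proof of Theorem \ref{thm:mainres}) with the classical monotonicity inequality relating $\lambda_1(\Omega)$ to $\inrad(\Omega)$. In short, Theorem \ref{th:Large-Inscribed-Ball} yields a wavelength-scale ball at $x_0$ whose complement in $\Omega$ has small volume fraction, the asymmetry condition upgrades this to a fully inscribed ball, and finally Faber--Krahn-type monotonicity converts the wavelength scale $1/\sqrt{\lambda_1(\Omega)}$ into the inradius scale.

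Concretely, I would first fix $\epsilon_0 = \epsilon_0(n,\alpha)$ sufficiently small (e.g. $\epsilon_0 < \alpha 2^{-n}$) and apply Theorem \ref{th:Large-Inscribed-Ball} at $x_0$ to obtain some $r_0 = r_0(n,\alpha)$ with
\begin{equation}
\frac{|B_{r_0/\sqrt{\lambda_1(\Omega)}}(x_0) \cap \Omega|}{|B_{r_0/\sqrt{\lambda_1(\Omega)}}(x_0)|} \geq 1 - \epsilon_0.
\end{equation}
I would then argue that the ball $B_{r_0/(2\sqrt{\lambda_1(\Omega)})}(x_0)$ is actually fully contained in $\Omega$: if it contained a point of $\partial\Omega$, the $\alpha$-asymmetry assumption applied at that boundary point would force the relative deficit $|B_{r_0/\sqrt{\lambda_1(\Omega)}}(x_0) \setminus \Omega|/|B_{r_0/\sqrt{\lambda_1(\Omega)}}(x_0)|$ to exceed $\alpha 2^{-n}$, contradicting the choice of $\epsilon_0$. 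This is exactly the observation the authors already record immediately after stating Theorem \ref{th:Large-Inscribed-Ball}.

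It remains to convert $1/\sqrt{\lambda_1(\Omega)}$ into $\inrad(\Omega)$. By definition there is a Euclidean ball of radius $\inrad(\Omega)$ inscribed in $\Omega$, so Dirichlet monotonicity under inclusion gives
\begin{equation}
\lambda_1(\Omega) \leq \lambda_1\bigl(B_{\inrad(\Omega)}\bigr) = \frac{C(n)}{\inrad(\Omega)^2},
\end{equation}
where $C(n)$ is the square of the first positive zero of the appropriate Bessel function. Rearranging, $\inrad(\Omega) \leq \sqrt{C(n)}/\sqrt{\lambda_1(\Omega)}$, and therefore
\begin{equation}
\frac{r_0}{2\sqrt{\lambda_1(\Omega)}} \geq \frac{r_0}{2\sqrt{C(n)}} \inrad(\Omega) =: C \inrad(\Omega),
\end{equation}
with $C = C(n,\alpha)$. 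Combined with the fully inscribed ball from the previous step, this yields the desired $B_{C \inrad(\Omega)}(x_0) \subseteq \Omega$.

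There is no real obstacle in this argument; the only mildly technical point is the upgrade from a near-inscribed ball to a fully inscribed one via the asymmetry condition, and that step is already handled in the body of the paper. Everything else is a routine application of eigenvalue monotonicity and the sharp value of $\lambda_1$ for a Euclidean ball.
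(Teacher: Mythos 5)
Your proof is correct and follows essentially the same route as the paper's: a fully inscribed wavelength ball at $x_0$ obtained from Theorem \ref{th:Large-Inscribed-Ball} together with the asymmetry upgrade, followed by a comparison of $1/\sqrt{\lambda_1(\Omega)}$ with $\inrad(\Omega)$. The only cosmetic difference is that you derive the needed bound $\inrad(\Omega)\le C(n)/\sqrt{\lambda_1(\Omega)}$ directly from domain monotonicity and the ball eigenvalue, whereas the paper quotes the two-sided inradius estimate from \cite{M}, of which only this elementary direction is actually used.
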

	
	\begin{proof}[Proof of Proposition \ref{prop:Ball-at-Max}]
		We observe that by the results of \cite{M}, $ \alpha $-asymmetric domains $ \Omega $ satisfy
		\begin{equation}
			\frac{C_1(\alpha, n)}{\sqrt{\lambda_1(\Omega)}} \leq \inrad(\Omega) \leq \frac{C_2(n)}{\sqrt{\lambda_1(\Omega)}}.
		\end{equation}
			
		Now, it follows from our Theorem \ref{th:Large-Inscribed-Ball} (see \cite{GM}) that there exists an inscribed wavelength ball at the max point $ x_0 $, which concludes the proof.
	\end{proof}
	
	In particular, the last proposition applies for convex domains. We mention in this connection that localization results for maximum points of $ \phi_{\lambda_1(\Omega)} $ in case $ \Omega  $ is a planar convex domain can be found in the work of Grieser-Jerison (see \cite{GJ}).

	On the other hand, it is natural to ask how large is $ \phi_{\lambda_1(\Omega)} $ at points admitting a large inscribed ball. For reasonably nicely behaved domains, we have the following:
	
	\begin{corollary} \label{cor:Lower-Bound}
		Let $ \Omega $ be a $C^{2, \beta}$-regular $\alpha $-asymmetric domain and let $ \phi_{\lambda_1(\Omega)} $ be normalized so that $ \| \phi_{\lambda_1(\Omega)} \|_{L^\infty(\Omega)} = 1 $. Suppose that for $ \tilde{x} \in \Omega $ there exists a maximal inscribed ball $ B_r(\tilde{x}) \subseteq \Omega $ where $ r := c \inrad(\Omega) $ for some $0 < c \leq 1 $, such that $ \frac{|\Omega \setminus B_r(\tilde{x})|}{|\Omega|} $ is sufficiently small. Then
		\begin{equation}
			\phi_\lambda (\tilde{x}) > C,
		\end{equation}
		where $ C = C(|\Omega|, \pa \Omega, c, n) $.
	\end{corollary}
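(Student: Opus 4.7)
My plan is to locate a maximum point $x_0$ of $\phi_{\lambda_1(\Omega)}$ near $\tilde x$, then transfer the known value $\phi_{\lambda_1(\Omega)}(x_0) = 1$ to a lower bound at $\tilde x$ via a Harnack chain.

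For the localization step, I would apply Proposition \ref{prop:Ball-at-Max} to produce, at any maximum point $x_0$, an inscribed ball $B_R(x_0) \subseteq \Omega$ with $R = C(n,\alpha)\,\inrad(\Omega)$. If one had $|x_0 - \tilde x| \geq r + R$, then $B_r(\tilde x)$ and $B_R(x_0)$ would be disjoint open subsets of $\Omega$, so that
\[
\frac{|\Omega \setminus B_r(\tilde x)|}{|\Omega|} \geq \frac{\omega_n R^n}{|\Omega|}.
\]
Since Mangoubi's two-sided bound gives $\inrad(\Omega) \sim 1/\sqrt{\lambda_1(\Omega)}$ and both $r, R$ are fixed fractions of $\inrad(\Omega)$, the right-hand side is quantitatively controlled in terms of $|\Omega|, \partial\Omega, c, n, \alpha$. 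Requiring the hypothesized smallness threshold for $|\Omega \setminus B_r(\tilde x)|/|\Omega|$ to lie strictly below this quantity forces $|x_0 - \tilde x| < r + R$, so the set $K := \overline{B_r(\tilde x)} \cup \overline{B_R(x_0)}$ is a connected subset of $\Omega$ at uniform positive distance from $\partial\Omega$.

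For the second step, I would apply Harnack along a chain in $K$. On any Euclidean ball $B_\rho(y) \subset \Omega$ with $\rho \lesssim 1/\sqrt{\lambda_1(\Omega)}$, the positive solution $\phi_{\lambda_1(\Omega)}$ of $-\Delta u = \lambda_1(\Omega)\, u$ satisfies the standard elliptic Harnack inequality $\sup_{B_{\rho/2}(y)} u \leq C_H(n)\, \inf_{B_{\rho/2}(y)} u$ with a purely dimensional constant, since the zero-order coefficient $\lambda_1(\Omega)\,\rho^2$ is then $O(1)$. Covering $K$ by a bounded number of overlapping such balls and chaining Harnack from $x_0$ to $\tilde x$ yields $\phi_{\lambda_1(\Omega)}(\tilde x) \geq C_H^{-k}\, \phi_{\lambda_1(\Omega)}(x_0) = C_H^{-k}$ for a uniformly controlled $k$.

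The hard part will be the accounting: both the smallness threshold in the localization step and the chain length in the Harnack step must be traced back to the stated data $|\Omega|, \partial\Omega, c, n$ alone. The $C^{2,\beta}$ regularity and the $\alpha$-asymmetry enter only to ensure Mangoubi's inradius bounds and the validity of Proposition \ref{prop:Ball-at-Max}; once those are in place, both scales $r$ and $R$ become comparable to $1/\sqrt{\lambda_1(\Omega)}$, and the whole argument becomes dimensional and quantitative.
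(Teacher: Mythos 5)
Your route is genuinely different from the paper's. The paper localizes the maximum point by applying Faber--Krahn to $\Omega\setminus B_r(\tilde x)$ to show $\lambda_1(\Omega\setminus B_r(\tilde x))\geq \tilde C_0\lambda_1(\Omega)$ with $\tilde C_0$ large, then invokes Corollary \ref{cor:Localization} (i.e.\ Theorem \ref{thm:mainres}) to conclude $\rho_{\tilde x}$ is sub-wavelength; the transfer from $\phi_{\lambda_1(\Omega)}(x_0)=1$ to a bound at $\tilde x$ is then done with the Schauder gradient estimate $\|\nabla\phi_{\lambda_1(\Omega)}\|_{L^\infty(\Omega)}\lesssim\sqrt{\lambda_1(\Omega)}$ --- this is precisely where the $C^{2,\beta}$ hypothesis is used. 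Your localization via disjointness of $B_r(\tilde x)$ and the inscribed ball $B_R(x_0)$ from Proposition \ref{prop:Ball-at-Max} is more elementary (it bypasses Theorem \ref{thm:mainres} entirely), and replacing the global gradient bound by an interior Harnack chain would in principle dispense with boundary regularity. That said, your second step has a genuine quantitative gap.

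The gap is that your localization only yields the \emph{strict} inequality $|x_0-\tilde x|<r+R$, with no quantitative margin. Note first that $K=\overline{B_r(\tilde x)}\cup\overline{B_R(x_0)}$ is \emph{not} at uniform positive distance from $\partial\Omega$: $B_r(\tilde x)$ is a maximal inscribed ball, so its closure touches $\partial\Omega$. More seriously, the only clearance from $\partial\Omega$ that your argument guarantees along the segment from $x_0$ to $\tilde x$ is
\begin{equation*}
\min_{y\in[x_0,\tilde x]} d(y,\partial\Omega)\;\geq\;\tfrac{1}{2}\bigl(r+R-|x_0-\tilde x|\bigr),
\end{equation*}
which is positive but not bounded below; when the two balls barely overlap, the chain must pass arbitrarily close to $\partial\Omega$, the admissible Harnack radii degenerate, and the exponent $k$ in $C_H^{-k}$ is uncontrolled. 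The fix is to quantify the overlap: if $|x_0-\tilde x|\geq r+R-\delta$, the lens $B_R(x_0)\cap B_r(\tilde x)$ has volume at most $C(n)\,\delta^{(n+1)/2}R^{(n-1)/2}$, so $|\Omega\setminus B_r(\tilde x)|\geq \omega_nR^n-C(n)\,\delta^{(n+1)/2}R^{(n-1)/2}\geq\tfrac12\omega_nR^n$ once $\delta$ is a suitable fixed fraction of $R$. Choosing the smallness threshold below $\tfrac12\omega_nR^n/|\Omega|$ then forces $|x_0-\tilde x|\leq r+R-\delta$ with $\delta\gtrsim\inrad(\Omega)\sim 1/\sqrt{\lambda_1(\Omega)}$, the segment stays at distance $\gtrsim 1/\sqrt{\lambda_1(\Omega)}$ from $\partial\Omega$, and your chain closes with a dimensional number of balls (the constants then also depend on $\alpha$ through $R$, consistent with the statement). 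With that repair your argument is complete.
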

	
	Analogously, one can show a similar statement by demanding that $ |B_r(\tilde{x}) \cap \Omega| $ is sufficiently large in comparison to $ |\Omega| $.
	
	\begin{proof}[Proof of Corollary \ref{cor:Lower-Bound}]
		Let us first suppose that
		\begin{equation}
		|\Omega| = \kappa r^n, \quad \kappa > \omega_n,
		\end{equation}
		where $\omega_n$ is the volume of a ball of radius $1$. 
		We use the Faber-Krahn inequality to obtain
		\begin{multline}
		\lambda_1(\Omega \setminus B_r (\tilde{x})) \geq \frac{C}{|\Omega \setminus B_r (\tilde{x})|^{2/n}} = \frac{C}{(|\Omega| - \omega_nr^n)^{2/n}} = \frac{C}{(\kappa-\omega_n)^{2/n} r^2} = \\ = \frac{C}{(\kappa-\omega_n)^{2/n} (c \inrad(\Omega))^2} \geq \frac{C C_2(n)}{c^2 (\kappa-\omega_n)^{2/n}} \lambda_1(\Omega) =: \tilde{C}_0 \lambda_1(\Omega).
		\end{multline}
		
		By assumption, $ \tilde{C}_0 $ is sufficiently large, i.e., in particular $ \tilde{C}_0 > \frac{c_0}{r^2_0} $, so we may apply Corollary \ref{cor:Localization} to obtain that
		\begin{equation}
		\rho_{\tilde{x}} \leq \beta^{-1}(\tilde{C}_0) = \sqrt{\frac{c_0}{\tilde{C}_0 \lambda_1(\Omega)}}.
		\end{equation} 
		
		On the other hand, the Schauder a priori estimates up to the boundary for $ \phi_{\lambda_1(\Omega)} $ (see \cite{GT}, Theorem 6.6) yield the existence of $ \gamma = \gamma(\Omega, n) $, such that
		\begin{equation}
		\| \nabla \phi_{\lambda_1(\Omega)} \|_{L^\infty(\Omega)} \leq \gamma(\Omega, n) \sqrt{\lambda_1(\Omega)}.
		\end{equation}
		
		As by assumption $ \phi_{\lambda_1(\Omega)} (x_0) = 1 $ and $ \tilde{C}_0 $ is sufficiently large, then
		\begin{equation}
		\phi_{\lambda_1(\Omega)}(\tilde{x}) \geq C = C(c_0, \tilde{C}_0, \gamma),
		\end{equation}
		which concludes the claim.
	\end{proof}
	
	\section{Relation between maximum points and convex obstacles}
	
		Note that Theorem \ref{thm:mainres} holds for arbitrary obstacles and gives a bound on the distance $ \rho_x $ to maximum points of $ \phi_{\lambda_1(\Omega)} $. However, it is desirable to deduce that $ \rho_x = 0 $, i.e. maximizers actually contain the maximum points of $ \phi_{\lambda_1(\Omega)} $.
		
		From Proposition \ref{prop:Ball-at-Max} and Theorem \ref{thm:mainres} we deduce the following:
		
		\begin{theorem} \label{thm:Kurata-comment}
			Let $D$ be a convex obstacle, and $\bar{x} + D$ maximize $\lambda_1(\Omega \setminus (x + D))$. Then there exists a constant $C_0 = C_0(\alpha, n)$ such that if $\lambda_1(\Omega \setminus (\bar{x} + D)) \geq C \lambda_1(\Omega) $ for some $C \geq C_0$, then $\rho_{\bar{x}} = 0$.
			
			In other words, either $ \mu_\Omega \sim \lambda_1 (\Omega) $ or $ \rho_{\bar{x}} = 0 $.
			\end{theorem}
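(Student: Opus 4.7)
The plan is to prove the contrapositive: show that whenever $\rho_{\bar{x}} > 0$, the maximizer value $\lambda_1(\Omega \setminus (\bar{x}+D))$ cannot exceed a constant multiple $C_0(\alpha,n)\,\lambda_1(\Omega)$. The strategy marries two ingredients already in hand — Proposition \ref{prop:Ball-at-Max}, which furnishes a wavelength-sized inscribed ball in $\Omega$ at any maximum point of $\phi_{\lambda_1(\Omega)}$, and the convexity of $D$ (hence of $\bar{x}+D$), which yields a supporting hyperplane separating any uncovered maximum point from the obstacle. Combining these, one can exhibit a wavelength-sized ball sitting entirely in $\Omega \setminus (\bar{x}+D)$ and conclude by Faber--Krahn.

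Assuming $\rho_{\bar{x}} > 0$, I would fix a maximum point $y_0 \notin \bar{x}+D$ of $\phi_{\lambda_1(\Omega)}$ and let $B := B_R(y_0) \subseteq \Omega$ be the inscribed ball provided by Proposition \ref{prop:Ball-at-Max}, with $R \geq c(\alpha,n)/\sqrt{\lambda_1(\Omega)}$. Take $p \in \bar{x}+D$ nearest to $y_0$, set $\nu = (y_0-p)/\rho_{\bar{x}}$, and invoke convexity of $\bar{x}+D$ to obtain a supporting hyperplane at $p$ — explicitly, $\bar{x}+D \subseteq \{z : \langle z-p,\nu\rangle \leq 0\}$. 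Then the spherical cap
\[
K := B \cap \{z : \langle z-p, \nu\rangle > 0\}
\]
is automatically disjoint from $\bar{x}+D$, and thus lies entirely inside $\Omega \setminus (\bar{x}+D)$. Since the separating hyperplane sits at distance $\rho_{\bar{x}} \geq 0$ from the center $y_0$ on the side opposite to $\nu$, the cap $K$ contains at least a hemisphere of $B$, and an elementary calculation then inscribes a Euclidean ball $B' \subseteq K$ of radius $(R+\rho_{\bar{x}})/2 \geq R/2$. Domain monotonicity of the Dirichlet ground state yields
\[
\lambda_1(\Omega \setminus (\bar{x}+D)) \;\leq\; \lambda_1(B') \;\leq\; \frac{4 C_n}{R^2} \;\leq\; C_0(\alpha, n)\,\lambda_1(\Omega),
\]
so any $C \geq C_0$ with $\lambda_1(\Omega \setminus (\bar{x}+D)) \geq C\lambda_1(\Omega)$ forces $\rho_{\bar{x}} = 0$.

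The single non-routine ingredient is the convexity of the obstacle: without it, $\bar{x}+D$ could conceivably wrap around $y_0$ and block any inscribed ball of wavelength scale on its far side, and the construction of $K$ (and hence $B'$) collapses. Once convexity delivers the supporting hyperplane and the cap, the remainder is essentially Faber--Krahn together with Mangoubi's inradius estimate for $\alpha$-asymmetric domains, already used in the proof of Proposition \ref{prop:Ball-at-Max}. A small point worth double-checking is that $B'$ is indeed open and genuinely contained in $\Omega \setminus (\bar{x}+D)$ (not merely in its closure), but this is immediate from the strict inequality defining $K$.
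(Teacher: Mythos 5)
Your proposal is correct and follows essentially the same route as the paper: take the wavelength inscribed ball at an uncovered maximum point from Proposition \ref{prop:Ball-at-Max}, use convexity of $\bar{x}+D$ to cut off a half-ball (the paper's $B^{1/2}$, your cap $K$ with the inscribed ball $B'$) lying in $\Omega\setminus(\bar{x}+D)$, and conclude by domain monotonicity together with the inradius bound. You merely spell out the supporting-hyperplane construction that the paper leaves implicit, and phrase the argument as a contrapositive rather than a contradiction.
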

			
			\begin{proof}
				To the contrary let us suppose that $ \rho_{\bar{x}} = d(\bar{x} + D, x_0) > 0 $ where $ x_0 $ is a maximum point of $ \phi_{\lambda_1(\Omega)} $ and $ \lambda_1(\Omega \setminus (\bar{x} + D)) \geq C \lambda_1(\Omega) $ for an arbitrary large $ C > 0 $.
				
				We apply the statement of Proposition \ref{prop:Ball-at-Max} and deduce that there is a wavelength inscribed ball $ B $ at $ x_0 $. As $ D $ is a convex domain, we can find a wavelength half-ball $ B^{1/2} \subset \Omega \setminus (\bar{x}+D) $ containing $ x_0 $. By the assumption and eigenvalue monotonicity with respect to inclusion:
				
				\begin{equation}
					 C \lambda_1(\Omega) \leq \lambda_1(\Omega \setminus (\bar{x} + D)) \leq \lambda_1(B^{1/2}) \leq \frac{C_1}{(\inrad (\Omega))^2} = C_2 \lambda_1(\Omega),
				\end{equation}
				where $  C_2 = C_2(n, \alpha) $. Taking $ C $ sufficiently large we get a contradiction.
				
			\end{proof}
			
   It is clear that for explicit applications, particularly in the case of convex domains, Theorem \ref{thm:Kurata-comment} is dependent on a precise knowledge of the location of the maximum point of $\phi_{\lambda_1(\Omega)}$. Localization of the maximum point of $\phi_{\lambda_1(\Omega)}$ (or more generally, the ``hot spot'') is a problem which is far from being settled. Here we take the space to augment Theorem \ref{thm:Kurata-comment} with the recent results of \cite{BMS}.
   
    First we recall the definition of the ``heart'' of a convex body $\Omega$. The following intuitive definition appears in \cite{EH}, and it is equivalent to the (more technical) definition presented in \cite{BMS}.
   
	   \begin{definition}\label{def:heart-conv-dom}
	   	Let $P$ be a hyperplane in $\RR^n$ which intersects $\Omega$ so that $\Omega \setminus P$ is the union of two components located on either side of $P$. The domain $\Omega$ is said to have the interior reflection property with respect to $P$ if the reflection through $P$ of one of these subsets, denoted $\Omega_s$, is contained in $\Omega$, and in that case $P$ is called a hyperplane of interior reflection for $\Omega$. 
	   	When $\Omega$ is convex, the heart of $\Omega$, denoted by $\heartsuit (\Omega)$, is defined as the intersection of all such $\Omega\setminus\Omega_s$ with respect to hyperplanes of interior reflection of $\Omega$.
	   \end{definition}
	   
	The following result is contained in Proposition 4.1 of \cite{BMS}.
	\begin{proposition}[\cite{BMS}]\label{prop:BMS}
	The unique maximum point $x_0$ of $\phi_{\lambda_1(\Omega)}$ is contained in $\heartsuit(\Omega)$. Furthermore, $x_0$ is contained in the interior of $\heartsuit(\Omega)$, if the latter is non-empty.
	\end{proposition}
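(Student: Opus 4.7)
The natural approach is the reflection / moving-plane method adapted to the first Dirichlet eigenfunction of a convex domain. The plan is, for each hyperplane $P$ of interior reflection with reflecting subset $\Omega_s$, to compare $\phi_{\lambda_1(\Omega)}$ on $\Omega_s$ with its reflection across $P$, conclude that the (unique) maximum $x_0$ cannot lie in $\Omega_s$, and then intersect over all such $P$ to obtain $x_0 \in \heartsuit(\Omega)$. A refinement via Hopf's boundary point lemma will rule out $x_0$ lying on any such $P$, and a compactness argument on hyperplanes of interior reflection will then place $x_0$ in the interior of $\heartsuit(\Omega)$.

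For a fixed such $P$, write $\sigma$ for the reflection through $P$ and set $D := \sigma(\Omega_s) \subset \Omega$. The function
\begin{equation}
w(x) := \phi_{\lambda_1(\Omega)}(x) - \phi_{\lambda_1(\Omega)}(\sigma(x)), \quad x \in D,
\end{equation}
satisfies $(-\Delta - \lambda_1(\Omega))w = 0$ in $D$, with $w = 0$ on $P \cap \partial D$ and $w(x) = \phi_{\lambda_1(\Omega)}(x) > 0$ on the remaining part $\sigma(\partial\Omega \cap \partial\Omega_s)$ of $\partial D$, which by the interior reflection property lies inside $\Omega$. Since $D \subsetneq \Omega$, strict monotonicity of the first Dirichlet eigenvalue gives $\lambda_1(\Omega) < \lambda_1(D)$, so the operator $-\Delta - \lambda_1(\Omega)$ obeys both the weak and strong maximum principle on $D$. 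This forces $w > 0$ throughout $D$, i.e. $\phi_{\lambda_1(\Omega)}(y) < \phi_{\lambda_1(\Omega)}(\sigma(y))$ for every $y \in \Omega_s$. In particular the unique maximum $x_0$ cannot lie in $\Omega_s$; intersecting over all hyperplanes of interior reflection gives $x_0 \in \heartsuit(\Omega)$.

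To promote this to the interior of $\heartsuit(\Omega)$ when the latter is non-empty, I argue by contradiction assuming $x_0 \in \partial\heartsuit(\Omega)$. Then there is a sequence $x_k \to x_0$ with $x_k \notin \heartsuit(\Omega)$, hence hyperplanes of interior reflection $P_k$ with $x_k \in \Omega_s(P_k)$. Extracting a convergent subsequence $P_k \to P_\infty$ in the space of oriented affine hyperplanes meeting $\Omega$, continuity of the reflection operation shows that $P_\infty$ is again a hyperplane of interior reflection and $x_0 \in \overline{\Omega_s(P_\infty)}$. Since $x_0 \in \heartsuit(\Omega) \subseteq \Omega \setminus \Omega_s(P_\infty)$, necessarily $x_0 \in P_\infty$. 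Applying Hopf's boundary point lemma to the $w$ associated with $P_\infty$ and $D_\infty := \sigma_{P_\infty}(\Omega_s(P_\infty))$ at the point $x_0 \in P_\infty \cap \partial D_\infty$, the positivity $w > 0$ in $D_\infty$ forces $\partial_\nu w(x_0) > 0$ for the inward unit normal $\nu$ to $D_\infty$. A direct computation using the reflection formula $J_\sigma = I - 2\nu_{P_\infty}\nu_{P_\infty}^T$ gives $\nabla w(x_0) = 2 (\nu_{P_\infty}\cdot \nabla \phi_{\lambda_1(\Omega)}(x_0))\, \nu_{P_\infty}$, so $\nabla \phi_{\lambda_1(\Omega)}(x_0) \neq 0$, contradicting the maximality of $x_0$.

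The step I expect to be the most delicate is the passage to the limit $P_k \to P_\infty$: one must verify that $P_\infty$ still satisfies the interior reflection property (i.e. that $\sigma_{P_\infty}(\Omega_s(P_\infty))$ does not escape $\Omega$ in the limit) and that the reflected cap $D_\infty$ has sufficient boundary regularity at $x_0$ for Hopf's lemma to apply. Convexity of $\Omega$ makes both issues manageable through standard continuity arguments, but this is the place most deserving of care.
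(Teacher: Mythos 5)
The paper offers no proof of this proposition --- it is quoted directly from Proposition 4.1 of \cite{BMS} --- and your argument is essentially the reflection/moving-plane proof given there. The architecture is right, but two steps as written are incorrect, and they are linked. First, the claim that $w = \phi_{\lambda_1(\Omega)} > 0$ on $\sigma(\partial\Omega\cap\partial\Omega_s)$ does not follow from the interior reflection property: that property only places the \emph{open} cap $\sigma(\Omega_s)$ inside $\Omega$, and the reflected boundary piece may perfectly well lie on $\partial\Omega$ (take $\Omega$ a square and $P$ parallel to a side: the reflections of the two transversal edges of $\Omega_s$ stay on $\partial\Omega$, where $w=0$). So on $\partial D$ you only get $w\geq 0$, and the weak plus strong maximum principles yield the dichotomy $w>0$ in $D$ \emph{or} $w\equiv 0$. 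The degenerate branch occurs exactly when $\Omega$ is symmetric about $P$; there the strict inequality $\phi(y)<\phi(\sigma(y))$ fails, but the conclusion $x_0\notin\Omega_s$ survives because $\phi=\phi\circ\sigma$ together with uniqueness of the maximum forces $x_0=\sigma(x_0)\in P$. You must say this to close the first part.

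Second, you never use the hypothesis that the interior of $\heartsuit(\Omega)$ is non-empty, which is a warning sign: it is precisely what rules out the branch $w\equiv 0$ in your Hopf step. If $\Omega$ were symmetric about $P_\infty$, both components of $\Omega\setminus P_\infty$ would be reflecting caps, forcing $\heartsuit(\Omega)\subseteq P_\infty\cap\Omega$ and hence empty interior; contrapositively, non-empty interior excludes any hyperplane of symmetry, so $w\not\equiv 0$, hence $w>0$ in $D_\infty$, and Hopf's lemma (in the version that tolerates a zeroth-order coefficient of arbitrary sign because $w(x_0)=0$ is the minimum value) gives $\partial_\nu w(x_0)>0$ and your contradiction $\nabla\phi_{\lambda_1(\Omega)}(x_0)\neq 0$. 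One further small point in the limiting argument you already flag: passing $P_k\to P_\infty$ a priori only yields $\sigma_{P_\infty}(\Omega_s(P_\infty))\subseteq\overline{\Omega}$; convexity upgrades this to $\subseteq\Omega$ because an open subset of $\overline{\Omega}$ lies in its interior. With these repairs the proof goes through.
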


	
	
	\subsection*{Acknowledgements} We thank Saskia Roos for drawing our attention to the reference \cite{He}. We are grateful to Antoine Henrot and Kazuhiro Kurata for their comments on a draft version. We also gratefully acknowledge the Max Planck Institute for Mathematics, Bonn and the Technion, Haifa for providing ideal working conditions.

\end{document}